\documentclass[11pt,psamsfonts,fleqn,leqno]{amsart}
\usepackage{amsfonts}
\usepackage{mathrsfs}
\usepackage{a4wide,amssymb,amsmath,amsthm,xspace,epsfig,color}
\usepackage{graphicx,ifthen}

\setlength{\textwidth}{165mm}
\setlength{\textheight}{220mm}

\def\L{\mathbb L}

\def\R{\mathbb R}
\def\S{\mathbb S}

\def\al{\alpha}
\def\be{\beta}
\def\ga{\gamma}

\def\th{\theta}

\def\si{\sigma}
\def\ph{\varphi}
\def\om{\omega}
\def\c{\mathfrak c}
\def\*{\times}
\def\la{\langle}
\def\ra{\rangle}
\def\ds{\displaystyle}
\def\mc{\mathcal}
\newcommand{\Int}[1]{\mathaccent 23{#1}}
\newcommand{\ol}{\overline}
\newcommand{\sm}{\setminus}

\newtheorem{thm}{Theorem}

\newtheorem{cor}[thm]{Corollary}

\newtheorem{lem}[thm]{Lemma}
\newtheorem{prop}[thm]{Proposition}

\newtheorem{claim}[thm]{Claim}

\begin{document}

\title{Exotic Differential Structures in Dimension 2}
\author[Sunanda Dikshit]{Sunanda Dikshit$^1$}
\address{$^1$ Department of Mathematics, The University of
  Auckland, Private Bag 92019, Auckland, New Zealand}
\email{s.dixit@math.auckland.ac.nz}

\author[David Gauld]{David Gauld$^2$}
\address{$^2$Department of Mathematics, The University of
  Auckland, Private Bag 92019, Auckland, New Zealand}
  \email{d.gauld@auckland.ac.nz}

\thanks{$^{1,2}$Supported in part by a grant from the New Zealand Institute of Mathematics and its Applications. This work represents part of the first author's PhD thesis prepared at the University of Auckland under the supervision of the second author.}

\date{\today}

\begin{abstract}
It is known that the long line supports $2^{\aleph_1}$ many non-diffeomorphic differential structures. We show that the long plane supports a similar number of exotic differential structures, ie structures which are not merely diffeomorphic to the product of two structures on the factor spaces.
\end{abstract}
\maketitle

 {\small 2010 \textit{Mathematics Subject Classification}:
  57R55, 57N05, 57R50}

  \medskip
  {\small \textit{Keywords and Phrases}: Long line, long ray, long plane, exotic differential structures, submanifold.}
 
  \section{Introduction}
 
In this paper, by a \emph{differential structure} we mean a C$^r$ differential structure for any $r\ge 1$. Recall, \cite{KP68}, that every C$^r$ structure contains a C$^s$ structure for any $s$ satisfying $r<s\le\infty$. Hence we are not concerned about the value of $r$.

It is well-known that euclidean space, $\R^n$, possesses a unique differential structure up to diffeomorphism for $n\not=4$ but $\R^4$ has $\c$ mutually non-diffeomorphic differential structures; see \cite[page 95]{K89} for early details and \cite{M11} for a recent survey. Thus $\R^4$ possesses many \emph{exotic differential structures}, i.e., differential structures which are not diffeomorphic to the 4-fold product of $\R$ with the usual structure (or the 2-fold product of $\R^2$ with the usual structure). Of course exotic differential structures were discovered more than half a century ago by Milnor in \cite{M56} where there is given the first construction of a differential structure on the 7-sphere $\S^7$ which is not diffeomorphic to the usual (product) differential structure inherited from $\R^8$. The existence of two mutually non-diffeomorphic differential structures on a manifold is not possible for metrisable manifolds in dimension up to 3, \cite{M11}: this result is due to Rad\'o in dimensions 1 and 2 and Moise in dimension 3.

On the other hand in \cite{N92} it is shown that when we relax the metrisability condition then even in dimension 1 there are $2^{\aleph_1}$ mutually non-diffeomorphic differential structures (on the long ray, hence also the long line $\L$). As a result there are also $2^{\aleph_1}$ mutually non-diffeomorphic differential structures on the long plane $\L^2$. In this paper we address the question: does the long plane support differential structures which are not diffeomorphic to any product structure? Our answer is ``yes.'' 

As usual we denote by  $\omega_1$ the set of countable ordinals with the order topology. Let $\mathbb L_{\ge0}$ denote the \emph{closed long ray}, ie the set $\omega_1\times[0,1)$ with the lexicographic order topology, and let $\mathbb L$ denote the \emph{long line} which is obtained from two copies of the closed long ray with their initial points identified to $0$. The \emph{(open) long ray} is the 1-manifold $\L_+=\L_{\ge0}-\{(0,0)\}$. Identify $\al\in\om_1$ with $(\al,0)\in\L_{\ge0}$. We will exhibit non-product differential structures on $\L_+^2$. As in \cite{N92} similar structures may then be deduced on $\L^2$.

The following result is well-known and is found in many books introducing Set Theory but we include it for completeness. Note that it does not matter whether we are considering $C$ and $D$ as subsets of $\om_1$ or $\L_+$.

\begin{lem} \label{lemmaclub}
If $C,D\subset\omega_1$ are closed unbounded subsets then $C\cap D$ is also closed and unbounded.
\end{lem}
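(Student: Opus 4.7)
The plan is to handle the two properties separately. Closedness is essentially immediate: the intersection of two closed subsets of any topological space is closed, and since $\omega_1$ (and equally $\L_+$) carries the order topology, both $C$ and $D$ are closed subsets and so $C \cap D$ is too. I would dispose of this in one sentence.

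The substantive content is unboundedness. Given any $\alpha \in \omega_1$, I want to produce some $\gamma \in C \cap D$ with $\gamma > \alpha$. My approach is the standard back-and-forth construction: recursively define a strictly increasing sequence $\alpha_0 < \alpha_1 < \alpha_2 < \cdots$ in $\omega_1$ by setting $\alpha_0 = \alpha$ and, at each subsequent stage, using the unboundedness of $C$ (at odd stages) and of $D$ (at even positive stages) to choose $\alpha_{n+1}$ exceeding $\alpha_n$ and lying in the appropriate set. This yields a sequence whose odd-indexed terms lie in $C$ and whose positive even-indexed terms lie in $D$.

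Let $\gamma = \sup_{n \in \N} \alpha_n$. The crucial point, on which the whole lemma hinges, is that $\omega_1$ has uncountable cofinality, so the supremum of a countable strictly increasing sequence of countable ordinals is again a countable ordinal; hence $\gamma \in \omega_1$. I would flag this as the one place where the specific nature of $\omega_1$ is used; it is not really an obstacle, but it is the key ingredient.

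Finally, the subsequence $(\alpha_{2k+1})_{k \in \N}$ lies in $C$ and converges to $\gamma$ in the order topology, so by closedness of $C$ we have $\gamma \in C$; likewise the subsequence $(\alpha_{2k+2})_{k \in \N}$ witnesses $\gamma \in D$. Therefore $\gamma \in C \cap D$ with $\gamma > \alpha$, establishing unboundedness and completing the proof.
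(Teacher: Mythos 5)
Your proof is correct and is the standard interleaving argument that the paper itself omits (it states the lemma without proof, deferring to introductory set theory texts). The one essential ingredient — that $\omega_1$ has uncountable cofinality, so the supremum of the countable increasing sequence stays in $\omega_1$ — is correctly identified and used, so nothing further is needed.
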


\begin{prop} \label{shrinkL}
Suppose that $\mc D$ is a differential structure on $\L_+$ and $\al\in\L_+$. Then $((\al,\om_1),\mc D|(\al,\om_1))$ is diffeomorphic to $(\L_+,\mc D)$.
\end{prop}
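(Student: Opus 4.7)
The plan is to construct a diffeomorphism
\[
f:(\L_+,\mc D)\longrightarrow\bigl((\al,\om_1),\mc D|(\al,\om_1)\bigr)
\]
by transfinite recursion of length $\om_1$, gluing together diffeomorphisms on metrisable sub-intervals. The key tool is Rad\'o's theorem (mentioned in the introduction): every metrisable $1$-manifold with boundary has a unique smooth structure up to diffeomorphism. Consequently, any closed interval $[p,q]\subset\L_+$ with its inherited structure is smoothly equivalent to $[0,1]$, and likewise for closed intervals inside $(\al,\om_1)$; the initial half-open segments $\{x\in\L_+:x\le p\}$ and $\{y\in(\al,\om_1):y\le q\}$ are metrisable too and are smoothly equivalent to $(-\infty,0]$.

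First I would choose strictly increasing continuous cofinal sequences $(a_\xi)_{\xi<\om_1}$ in $\L_+$ and $(b_\xi)_{\xi<\om_1}$ in $(\al,\om_1)$. I would then inductively build diffeomorphisms $f_\xi:\{x\in\L_+:x\le a_\xi\}\to\{y\in(\al,\om_1):y\le b_\xi\}$ with $f_\xi(a_\xi)=b_\xi$ and $f_\xi$ extending $f_\eta$ for every $\eta<\xi$. At the base stage Rad\'o yields $f_0$. At successor stage $\xi=\eta+1$, Rad\'o provides a diffeomorphism $g_\eta:[a_\eta,a_{\eta+1}]\to[b_\eta,b_{\eta+1}]$, which I would adjust by post-composing with a self-diffeomorphism of the target (constructed via a bump function) so as to match the one-sided $r$-jet of $f_\eta$ at $a_\eta$; this ensures $C^r$-smoothness at the gluing point and lets me define $f_{\eta+1}$ by concatenation. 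At a limit $\lambda$, I take the common extension of the $f_\eta$ $(\eta<\lambda)$ and set $f_\lambda(a_\lambda):=b_\lambda$.

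The principal obstacle will be establishing smoothness of $f_\lambda$ at each limit point $a_\lambda$, where the inductive hypotheses give only continuity. For this I would fix in advance, for each countable limit $\lambda$, a chart $(U_\lambda,\varphi_\lambda)$ about $a_\lambda$ in $\L_+$ and $(V_\lambda,\psi_\lambda)$ about $b_\lambda$ in $(\al,\om_1)$, normalised so that $\varphi_\lambda(a_\lambda)=\psi_\lambda(b_\lambda)=0$. Since $\lambda$ is countable, a cofinal $\om$-subsequence of the $a_\xi$ eventually lies in $U_\lambda$ (and the corresponding $b_\xi$ in $V_\lambda$), and I would arrange the successor-stage choices above so that on this cofinal tail each $g_\eta$, read in the charts $\varphi_\lambda,\psi_\lambda$, coincides with a single affine map $t\mapsto ct$. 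This forces $f_\lambda$, expressed in these charts, to agree with that affine map on a left-neighbourhood of $0$ and therefore to be $C^r$ at $a_\lambda$. Once smoothness at every limit ordinal is secured, $f:=\bigcup_{\xi<\om_1}f_\xi$ is the desired diffeomorphism.
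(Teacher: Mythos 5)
Your overall strategy---a length-$\om_1$ recursion gluing diffeomorphisms of metrisable blocks---has a genuine gap at the limit stages, and the difficulty you correctly identify (smoothness of $f_\lambda$ at $a_\lambda$) is not resolved by your proposed fix. First, making only a cofinal $\om$-subsequence of the $g_\eta$ affine in the charts $\varphi_\lambda,\psi_\lambda$ does not control $f_\lambda$ on a whole left-neighbourhood of $a_\lambda$: the pieces lying between consecutive members of the subsequence are unconstrained there, so you cannot conclude that $f_\lambda$ agrees with $t\mapsto ct$ near $0$, and even differentiability at $a_\lambda$ does not follow. Second, if you instead demand affinity on a full tail $\{\eta:\eta_0\le\eta<\lambda\}$, the demands attached to different limit ordinals collide: when $\lambda$ is a limit of limit ordinals, every tail below $\lambda$ meets tails below smaller limits $\lambda'<\lambda$, and ``affine in the chart $\varphi_{\lambda'},\psi_{\lambda'}$'' is incompatible with ``affine in the chart $\varphi_\lambda,\psi_\lambda$'' because the transition maps between the two charts need not be affine. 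Without some such uniform control the derivatives of the $g_\eta$, read in the chart at $\lambda$, can oscillate or degenerate as $\eta\to\lambda$, and the union can genuinely fail to be $C^1$ at $a_\lambda$; this is precisely the mechanism exploited in \cite{N92} to produce non-diffeomorphic structures on $\L_+$, so the issue cannot be waved away.

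The paper sidesteps all of this: since $(\al,\om_1)$ and $\L_+$ coincide outside a metrisable initial segment, it suffices to produce a diffeomorphism that is the identity on a tail. Choose $\al<\be<\ga$, a chart $g:((0,\ga),\mc D|(0,\ga))\to((0,3),\mc U)$ with $g(\al)=1$ and $g(\be)=2$ (possible by uniqueness of the smooth structure on $\R$), and a diffeomorphism $\th:(0,3)\to(1,3)$ equal to the identity on $[2,3)$; then $h=g^{-1}\th g$ on $(0,\ga)$ and $h=\mathrm{id}$ on $(\be,\om_1)$ is the required diffeomorphism, the two definitions agreeing on the overlap. No transfinite induction, and hence no limit-stage analysis, is needed. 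If you wish to retain a recursive argument you must supply a working limit-stage mechanism; as it stands your proof does not go through.
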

\begin{proof}
Choose $\be,\ga\in\L_+$ such that $\al<\be<\ga$. Because $\R$ has a unique differential structure up to diffeomorphism and $(0,\ga)\subset\L_+$ is homeomorphic to $\R$ we may choose a diffeomorphism $g:((0,\ga),\mc D|(0,\ga))\to((0,3),\mc U)$, where $\mc U$ is the usual differential structure on $\R$ restricted to $(0,3)$. Furthermore we may assume that $g(\al)=1$ and $g(\be)=2$. Next let $\th:(0,3)\to(1,3)$ be a diffeomorphism (relative to $\mc U$) such that $\th(t)=t$ for all $t\in[2,3)$. For example set $\ds\th(t)=\left\{\begin{array}{lll}t+\sqrt{e}e^\frac{1}{t-2} & \mbox{ if } & t<2\\ t & \mbox{ if } & t\ge 2 \end{array}\right..$ Now define $h:\L_+\to(\al,\om_1)$ by $\ds h(t)=\left\{\begin{array}{lll}g^{-1}\th g(t) & \mbox{ if } & t<\ga\\ t & \mbox{ if } & t>\be \end{array}\right..$ Then $h$ is a diffeomorphism with respect to the structure $\mc D$.
\end{proof}

Recall the following result from \cite[Corollary 2.6]{BDG10}.
\begin{prop} \label{lineup} Suppose that $h:\L_+^2\to\L_+^2$ is an orientation-preserving homeomorphism. Then $\{\al\in\om_1\ /\ h(\L_+\*\{\al\})=\L_+\*\{\al\}\}$ is a closed unbounded set.
\end{prop}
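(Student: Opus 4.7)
The plan is to produce a closed unbounded set of ``invariant squares,'' do a local analysis at the resulting corners using the orientation hypothesis, and then propagate invariance from the top face of each invariant square to the entire horizontal line at its height. Every compact subset of $\L_+$ lies in some initial segment $[0,\al]$, hence every compact subset of $\L_+^2$ lies in a square $[0,\al]^2$. Because $h$ and $h^{-1}$ preserve compactness, the map $f:\om_1\to\om_1$ sending $\al$ to the least $\be\ge\al$ with $h([0,\al]^2)\cup h^{-1}([0,\al]^2)\subset[0,\be]^2$ is well-defined; a standard closing-off (iterating $f$ countably many times above each starting ordinal and taking suprema) yields a closed unbounded set $C\subset\om_1$ such that $h([0,\al]^2)=[0,\al]^2$ for every $\al\in C$.

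For $\al\in C$, $h$ restricted to $[0,\al]^2$ is a self-homeomorphism of the square and thus preserves its topological frontier in $\L_+^2$, namely the L-shape $E_\al=(\{\al\}\*[0,\al])\cup([0,\al]\*\{\al\})$. The corner $(\al,\al)$ is topologically distinguished on $E_\al$ as the unique point at which the closed complement $\L_+^2\sm\Int{[0,\al]^2}$ fails to be locally a 2-manifold-with-boundary (locally it is a three-quadrant set rather than a half-plane), forcing $h(\al,\al)=(\al,\al)$. A small neighborhood of $(\al,\al)$ in $\L_+^2$ is a topological disc in which $[0,\al]^2$ appears as a closed quadrant, and an orientation-preserving self-homeomorphism of $(\R^2,0)$ preserving the first quadrant cannot swap its two bounding rays (the transposition $(x,y)\mapsto(y,x)$ is orientation-reversing, and this obstruction can be seen on the link of the origin, where cyclic order must be preserved). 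Hence $h$ sends the horizontal face $[0,\al]\*\{\al\}$ to itself.

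To extend this from the bounded face $[0,\al]\*\{\al\}$ to the full horizontal line $\L_+\*\{\al\}$, I would apply the same corner analysis at every $\al'\in C$ above $\al$ to fix $h$ at each corner $(\al',\al')$ and on each top face $[0,\al']\*\{\al'\}$, then work inside the nested $h$-invariant annuli $[0,\be]^2\sm\Int{[0,\al]^2}$ (for $\be\in C$ above $\al$) to pin down the arc connecting $(\al,\al)$ to the outer vertical edge as the horizontal segment $[\al,\be]\*\{\al\}$, yielding $h(\L_+\*\{\al\})=\L_+\*\{\al\}$ for every $\al\in C$. Closedness of the target set then follows from continuity: if $\al_n\nearrow\al$ and each line at height $\al_n$ is preserved, then the limit $h(x,\al)=\lim h(x,\al_n)$ has second coordinate $\lim\al_n=\al$, so $h$ maps $\L_+\*\{\al\}$ into itself, and applying the same to $h^{-1}$ gives equality.

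The main obstacle is the propagation step: turning corner-local invariance of a bounded top face into invariance of the entire unbounded horizontal line requires a delicate orientation-preserving argument across the nested annular regions. This is precisely where the orientation-preserving hypothesis (as opposed to merely ``homeomorphism'') does real work --- without it, the coordinate-swap reflection of $\L_+^2$ would exchange horizontal and vertical lines and defeat any such conclusion.
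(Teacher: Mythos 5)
First, a remark on the comparison you asked for: the paper does not prove Proposition~\ref{lineup} at all --- it is quoted from \cite[Corollary 2.6]{BDG10} --- so the only question is whether your argument stands on its own. It does not, for two reasons. The first gap is the corner-fixing step. You distinguish $(\al,\al)$ as the unique point of the frontier at which $\L_+^2\sm(0,\al)^2$ ``fails to be locally a 2-manifold-with-boundary,'' but a three-quadrant set (the complement of an open quadrant) \emph{is} locally homeomorphic to a closed half-plane at its reflex corner --- rescale the angular coordinate from $3\pi/2$ to $\pi$ --- so it is a manifold with boundary everywhere and the corner is not singled out this way. Worse, no local repair is possible: the frontier of the invariant square is an arc in a surface, every arc in a surface is locally flat (Schoenflies), so the pair (plane, frontier) is locally homeomorphic to $(\R^2,\R\*\{0\})$ at the corner just as at every other point. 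Any argument fixing the corner must use global information.

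The second and more serious gap is the propagation step, which you acknowledge but do not carry out, and which cannot be carried out by the annulus argument you sketch. Even granting that $h$ preserves every square $(0,\be]^2$ for $\be$ in a club, fixes every corner and preserves every top face, nothing forces $h$ to preserve the segment $[\al,\be]\*\{\al\}$ inside the region between consecutive invariant squares: a homeomorphism supported in a small disc centred at $(\al+\tfrac12,\al)$ preserves all of those invariant sets (no ordinal lies in $(\al,\al+1)$) yet moves the horizontal line at height $\al$ off itself, so the line is not determined by the nested-square data. The real content of the cited result is the rigidity of the long ray itself: one examines the coordinate functions $t\mapsto h_i(t,\al)$ as continuous maps $\L_+\to\L_+$, uses that such maps are either bounded (hence eventually constant, since continuous real-valued functions on $\L_+$ are eventually constant) or agree with the identity on a club, and concludes that a closed embedded copy of $\L_+$ must eventually run along a single horizontal or vertical line; the orientation hypothesis then only has to exclude the horizontal/vertical swap. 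Your closing-off argument producing the club of invariant squares, and your observation that the coordinate transposition is orientation-reversing, are both correct and do appear in spirit in \cite{BDG10}, but they are the easy parts; the missing long-ray rigidity is where the theorem actually lives.
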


\begin{cor} \label{nice submanifolds} Suppose that $\mc F$ is a differential structure on $\L_+^2$ and that $\mc F$ is diffeomorphic to the product of two structures. Then 
$$\{\al\in\om_1\ /\ \L_+\*\{\al\} \mbox{ is a differentiable submanifold of } (\L_+^2,\mc F)\}$$
and
$$\{\al\in\om_1\ /\ \{\al\}\*\L_+ \mbox{ is a differentiable submanifold of } (\L_+^2,\mc F)\}$$
are closed unbounded subsets of $\om_1$. 
\end{cor}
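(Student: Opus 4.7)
The plan is to pull the slice structure through the hypothesised diffeomorphism and apply Proposition \ref{lineup} twice, once directly for horizontal slices and once, via a conjugation, for vertical slices. Let $\phi\colon(\L_+^2,\mc F)\to(\L_+,\mc D_1)\*(\L_+,\mc D_2)$ be a diffeomorphism furnished by the hypothesis, and let $\psi\colon\L_+^2\to\L_+^2$ denote the coordinate swap $(x,y)\mapsto(y,x)$. Because $\psi$ has constant Jacobian determinant $-1$ it is orientation-reversing on $\L_+^2$, and it is simultaneously a diffeomorphism between a product structure and the product with the two factors interchanged. Consequently exactly one of $\phi$ and $\psi\circ\phi$ is orientation-preserving as a self-homeomorphism of $\L_+^2$; after replacing $\phi$ with $\psi\circ\phi$ if necessary, I may assume $\phi$ is itself orientation-preserving while its codomain is still a product structure.

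Next I would apply Proposition \ref{lineup} to $\phi$ to obtain a closed unbounded $C_1\subseteq\om_1$ with $\phi(\L_+\*\{\al\})=\L_+\*\{\al\}$ for every $\al\in C_1$. For such $\al$ the slice $\L_+\*\{\al\}$ is manifestly a differentiable submanifold of the product target, and since $\phi$ is a diffeomorphism carrying this slice onto itself, pulling back along $\phi^{-1}$ shows $\L_+\*\{\al\}$ is a differentiable submanifold of $(\L_+^2,\mc F)$. Hence $C_1$ is contained in the first set of the statement.

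To handle vertical slices I would apply the proposition to the conjugate $\phi'=\psi\circ\phi\circ\psi$, which is again orientation-preserving because the two copies of the orientation-reversing $\psi$ cancel. The resulting closed unbounded set $C_2$ satisfies $\phi'(\L_+\*\{\al\})=\L_+\*\{\al\}$ for $\al\in C_2$, and using $\psi(\L_+\*\{\al\})=\{\al\}\*\L_+$ this unwinds to $\phi(\{\al\}\*\L_+)=\{\al\}\*\L_+$. The same transfer of differentiable structure along $\phi$ then places $C_2$ inside the second set of the statement. Each of the two sets in the statement therefore contains a closed unbounded subset of $\om_1$, giving the substantive content of the corollary.

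The main obstacle is the orientation bookkeeping: verifying that $\psi$ reverses orientation on $\L_+^2$, that precomposing by $\psi$ allows $\phi$ to be taken orientation-preserving while keeping its codomain a product of smooth structures on $\L_+$, and that the $\psi$-conjugate of an orientation-preserving $\phi$ is again orientation-preserving so Proposition \ref{lineup} applies twice. A secondary and more minor point is that the argument naturally produces closed unbounded \emph{subsets} of the two sets in the statement; promoting this to literal closedness of the sets themselves would require a further limit argument in $\om_1$, or else a reading of the conclusion as ``contains a closed unbounded subset'', which is all that subsequent appeals through Lemma \ref{lemmaclub} require.
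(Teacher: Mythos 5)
Your proposal is correct and takes essentially the same approach as the paper's proof: obtain an orientation-preserving diffeomorphism with a product structure (adjusting by the coordinate swap, which is the paper's ``interchanging the roles of $\mc D$ and $\mc E$''), apply Proposition \ref{lineup} to get a club of invariant horizontal slices, and handle vertical slices by interchanging coordinates. Your closing observation that the argument literally yields only a club \emph{subset} of each set is fair, but it applies equally to the paper's own proof and suffices for every later use via Lemma \ref{lemmaclub}.
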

\begin{proof}
There are two differential structures, say $\mc D,\mc E$, on $\L_+$ and a diffeomorphism $h:(\L_+,\mc D)\*(\L_+,\mc E)\to(\L_+^2,\mc F)$. Interchanging the roles of $\mc D$ and $\mc E$ if necessary we may assume $h$ preserves orientation. By Proposition \ref{lineup}, $S=\{\al\in\om_1\ /\ h(\L_+\*\{\al\})=\L_+\*\{\al\}\}$ is a closed unbounded set. Thus $\L_+\*\{\al\}$ is a differentiable submanifold of $(\L_+^2,\mc F)$ for each $\al\in S$. Interchanging the coordinates leads to the other half.
\end{proof}

We also require the following folklore result, cf \cite[Theorem 1]{M64} and \cite[Theorem 3 page 46]{M77}. 
\begin{prop}\label{extend diffeomorphism}
Let $M\subset\R^2$ be a compact topological manifold with boundary, $K\subset M$ a compact subset which contains the boundary of $M$ and suppose that $h:M\to M$ is a homeomorphism which is a diffeomorphism on a neighbourhood of $K$. Then $h$ can be approximated arbitrarily closely by a homeomorphism which is a diffeomorphism on $\Int{M}$ and agrees with $h$ on a neighbourhood of $K$.
\end{prop}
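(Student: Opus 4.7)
The strategy is to localize the smoothing via a finite cover and iterate a two-dimensional relative smoothing theorem. First, shrink the given neighbourhood of $K$: pick an open set $U$ with $K\subset U\subset M$ on which $h$ is a diffeomorphism, together with a smaller open neighbourhood $U_0$ of $K$ satisfying $\ol{U_0}\subset U$. Since $\partial M\subset K\subset U_0$, the compact set $L=M\sm U_0$ is contained in $\Int{M}$, and it is precisely on $L$ that $h$ must be smoothed.

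Next, cover $L$ by finitely many coordinate pairs: for each $p\in L$ choose smooth charts $(V_p,\ph_p)$ at $p$ and $(W_p,\psi_p)$ at $h(p)$ with $\ol{V_p},\ol{W_p}\subset\Int{M}$ and $h(\ol{V_p})\subset W_p$; by compactness of $L$ finitely many such $V_1,\dots,V_n$ suffice. I would then construct a sequence $h_0=h,h_1,\dots,h_n$ inductively so that $h_i$ is a diffeomorphism on some open neighbourhood $N_i$ of $K\cup V_1\cup\cdots\cup V_i$ and agrees with $h$ on a neighbourhood of $K$. To build $h_{i+1}$ from $h_i$, transfer to Euclidean coordinates via $g_i=\psi_{i+1}\circ h_i\circ\ph_{i+1}^{-1}$, a homeomorphism between open subsets of $\R^2$ which is already smooth on the open set $A=\ph_{i+1}(N_i\cap V_{i+1})$. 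Choose an open $A'$ with $\ol{A'}\subset A$, apply the Euclidean relative smoothing theorem of Munkres \cite{M64,M77} to approximate $g_i$ by a homeomorphism $\widetilde g_i$ that is smooth on all of $\ph_{i+1}(V_{i+1})$ and coincides with $g_i$ on a neighbourhood of $\ph_{i+1}(V_{i+1})\sm A'$, then pull $\widetilde g_i$ back through the charts and extend by $h_i$ outside $V_{i+1}$ to obtain $h_{i+1}$. Allotting $\ep/n$ to each step yields the desired $\ep$-approximation $h_n$.

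The main obstacle is the bookkeeping: each smoothing step must agree with the previous map on an \emph{open} neighbourhood of the already-smoothed region, not merely on that region itself, so that the next local modification still sees an open set of smoothness to leave alone. This forces the auxiliary shrinking $A'\subset\ol{A'}\subset A$ at every stage and the use of the relative version of Munkres's theorem rather than its absolute form. The hypothesis $\dim M=2$ is essential here: in dimension $2$ there are no local smoothing obstructions, so Munkres's theorem applies in each chart without extra assumptions, and the construction goes through.
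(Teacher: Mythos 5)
The paper gives no proof of this proposition at all: it is recorded as folklore with pointers to \cite{M64} and \cite{M77}, and your reduction --- finite chart cover of the compact set $L=M\sm U_0$ plus an induction invoking a Euclidean relative smoothing theorem at each stage --- is exactly the standard route those sources support. However, your inductive step as written cannot be carried out. You require $\widetilde g_i$ to be smooth on all of $\ph_{i+1}(V_{i+1})$ and simultaneously to coincide with $g_i$ on a neighbourhood of $\ph_{i+1}(V_{i+1})\sm A'$; since $A'\subset A$, that set contains $\ph_{i+1}(V_{i+1})\sm A$, which is precisely where $g_i$ is \emph{not} known to be smooth, so the two demands are contradictory. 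The shrinking has been applied to the wrong object. Smoothness on all of $V_{i+1}$ is unattainable in principle: for the extension by $h_i$ outside $V_{i+1}$ to yield a well-defined homeomorphism, $\widetilde g_i$ must agree with $g_i$ off a compact subset of $\ph_{i+1}(V_{i+1})$, i.e.\ near the frontier of the chart, and $h_i$ need not be smooth there.

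The repair is to shrink the cover itself: choose open sets $V_j'$ with $\ol{V_j'}\subset V_j$ still covering $L$, and let the induction hypothesis assert only that $h_i$ is a diffeomorphism on a neighbourhood of $K\cup\ol{V_1'}\cup\cdots\cup\ol{V_i'}$. The local theorem is then invoked in the form: $\widetilde g_i$ agrees with $g_i$ outside a compact subset of $\ph_{i+1}(V_{i+1})$, is smooth on a neighbourhood of $\ol{A'}\cup\ph_{i+1}(\ol{V_{i+1}'})$, and agrees with $g_i$ near $\ol{A'}$ (this last clause is what protects the previously smoothed region that the modification overlaps). You should also choose the $V_p$ with $\ol{V_p}$ disjoint from a fixed neighbourhood of $K$ --- possible since $p\in L$ and $K$ is compact --- as otherwise a later modification could destroy the agreement of $h_i$ with $h$ near $K$ that your induction asserts but your construction does not visibly preserve. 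With these changes the final map is smooth on a neighbourhood of $K\cup L$ intersected with $\Int{M}$, hence on all of $\Int{M}$, and the rest (the glued map being a homeomorphism of $M$, the $\ep/n$ budget, the vanishing of smoothing obstructions in dimension $2$) is as you say.
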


\section{Exotic Differential Structures on $\L_+^2$}

We now present a method of constructing from two differential structures on the long ray a differential structure on $\L_+^2$ which is not diffeomorphic to the product of any two differential structures on $\L_+$. The construction allows us to verify that there are $2^{\aleph_1}$ many non-diffeomorphic such structures. 

We require an auxiliary shearing homeomorphism $\si:[0,5]^2\to[0,5]^2$. The homeomorphism $\si$ is the identity except in the rectangle $\left(3,4\right)\*\left(1,4\right)$, does not change the first coordinate and maps the straight line segment $\left[3,4\right]\*\left\{2\right\}$ onto the two line segments $\left\{\left(x,3-2\left|x-3\frac{1}{2}\right|\right)\ :\ 3\le x\le4\right\}$. The notation $I_\al=(0,\al+1)$, $\ol{I_\al}=[0,\al+1]$, $O_\al=I_\al^2\subset\L_+^2$ and  $\ol{O_\al}=\ol{I_\al}^2\subset\L_{\ge0}^2$ is fixed throughout this section.

Begin with two differential structures $\mc D$ and $\mc E$ on $\L_+$; for example any of those in \cite{N92} will do. For each $\al\in\om_1\sm\{0\}$ choose order-preserving homeomorphisms $\psi_\al,\chi_\al:\ol{I_\al}\to[0,5]$ so that $\psi_\al(\al)=\chi_\al(\al)=2$, and that $(I_\al,\psi_\al)\in\mc D$ and $(I_\al,\chi_\al)\in\mc E$. 

For each $\al\in\om_1\sm\{0\}$ we will construct by induction on $\al$ a homeomorphism $\ph_\al:\ol{O_\al}\to [0,5]^2$ in such a way that $\{(O_\al,\ph_\al)\ /\ \al\in\om_1\sm\{0\}\}$ is a basis for a differential structure on $\L_+^2$, i.e., for each $\al,\be\in\om_1\sm\{0\}$ the maps $\ph_\al\ph_\be^{-1}$ and $\ph_\be\ph_\al^{-1}$ are smooth where defined within $(0,5)^2$. The induction includes the further condition:
\begin{itemize}
\item The homeomorphisms $\psi_\al\*\chi_\al$ and $\ph_\al$ agree on neighbourhoods of  $\ol{O_\al}\sm O_\al$ and of $[0,\al)\*[\al,\al+1]$ as well as on a neighbourhood in $\ol{O_\al}\sm[0,\al)^2$ of $\{\al\}\*[0,\al]$.
\end{itemize}

\begin{figure}[ht]  
\begin{picture}(320,160)(0,0)
\put(40,10){ \epsfig{figure=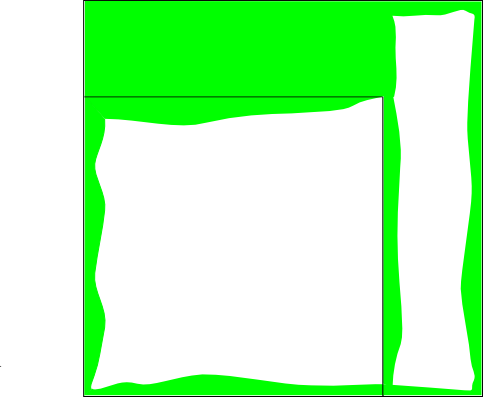,width=70mm}}
\put(250,100){$\ol{O_\al}$}
\put(75,0){$0$}
\put(198,0){$\al$}
\put(230,0){$\al+1$}
\put(67,7){$0$}
\put(67,131){$\al$}
\put(50,170){$\al+1$}
 \end{picture}
 \caption{\label{h_n} Where $\ph_\al=\psi_\al\*\chi_\al$.}
 \end{figure}

\noindent{\bf Definition of $\ph_1$:-} Set $\ph_1=\si(\psi_1\*\chi_1)$.\\
{\bf Definition of $\ph_{\al+1}$ given $\ph_\al$:-} Define
$$\ph_{\al+1}(z)=\left\{\begin{array}{lll} (\psi_{\al+1}\*\chi_{\al+1})(\psi_\al\*\chi_\al)^{-1}\ph_\al(z) & \mbox{ if } & z\in \ol{O_\al};\\  \si(\psi_{\al+1}\*\chi_{\al+1})(z) & \mbox{ if } & z\in \ol{O_{\al+1}}-O_\al\end{array} \right..$$  
It is easily checked that the inductive conditions are satisfied.

\noindent{\bf Definition of $\ph_\al$, where $\al$ is a limit ordinal, given $\ph_\be$ for all $\be\in\om_1\sm\{0\}$ with $\be<\al$:-} Firstly choose some metric $d$ on $\ol{O_\al}$ compatible with the topology. Next choose an increasing sequence $\la\al_n\ra$ from $\om_1\sm\{0\}$ converging to $\al$; set $\al_0=0$. Somewhat as in the previous case we would like to let $\ph_\al$ be $(\psi_{\al}\*\chi_{\al})(\psi_{\al_n}\*\chi_{\al_n})^{-1}\ph_{\al_n}$ on $\ol{O_{\al_n}}$ and be $\si(\psi_{\al}\*\chi_{\al})$ outside the union of these common domains but this would work only if all maps of the form $(\psi_{\al_m}\*\chi_{\al_m})^{-1}\ph_{\al_m}$ and $(\psi_{\al_n}\*\chi_{\al_n})^{-1}\ph_{\al_n}$ agree on $\ol{O_{\al_{\min\{m,n\}}}}$. We modify these maps inductively so that they do agree, at least on enough of $\ol{O_{\al_{\min\{m,n\}}}}$. To effect this we construct a sequence of homeomorphisms $\la h_n:[0,3]^2\to[0,3]^2\ra$, where $n\ge1$. We demand the following properties:
\begin{itemize}
\item $h_n:(0,3)^2\to(0,3)^2$ is a diffeomorphism;
\item $h_n$ is the identity on a neighbourhood of $([0,3]\*[2,3])\cup([2,3]\*[0,3])$;
\item $h_n=(\psi_{\al_n}\*\chi_{\al_n})(\psi_{\al_{n-1}}\*\chi_{\al_{n-1}})^{-1}h_{n-1}\ph_{\al_{n-1}}\ph_{\al_n}^{-1}$ on a neighbourhood of $\ph_{\al_n}([0,\al_{n-1})^2)$ when $n>1$;
\item $h_n=(\psi_{\al_n}\*\chi_{\al_n})\ph_{\al_n}^{-1}$ on a neighbourhood of $\ph_{\al_n}([0,\al_{n-1})\*[\al_{n-1},\al_n])$ when $n>1$;
\item for $n>1$, on $\ph_{\al_n}([\al_{n-1},\al_n]\*[0,\al_{n-2}])$, $h_n$ is sufficiently close to $(\psi_{\al_n}\*\chi_{\al_n})\ph_{\al_n}^{-1}$ that for any $(x,y)\in\ph_{\al_n}([\al_{n-1},\al_n]\*[0,\al_{n-2}])$, we have
$$d\left((\psi_{\al_n}\*\chi_{\al_n})^{-1}(x,y),(\psi_{\al_n}\*\chi_{\al_n})^{-1}h_n(x,y)\right)<\frac{1}{n}.$$
To achieve this we use uniform continuity of $(\psi_{\al_n}\*\chi_{\al_n})^{-1}$.
\end{itemize}
Notice that the conditions on $h_n$ are mutually consistent by the inductively assumed conditions on $\ph_{\al_m}$ and $h_{n-1}$.

\begin{figure}[ht]  
\begin{picture}(320,210)(0,0)
\put(60,10){ \epsfig{figure=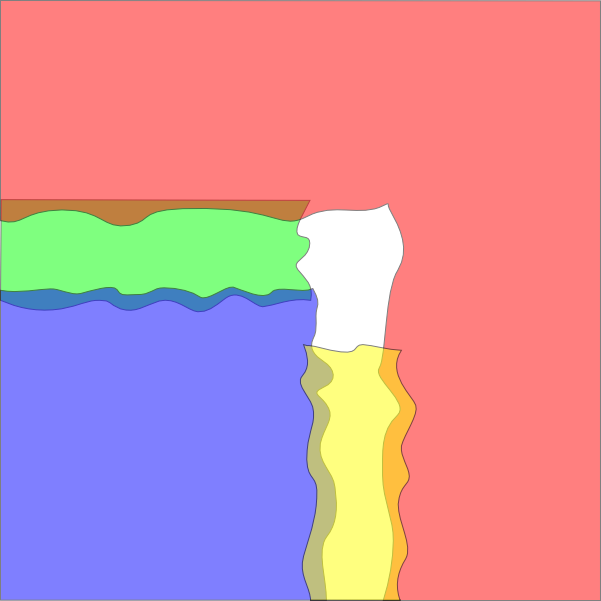,width=70mm}}
\put(150,180){\footnotesize$h_n=$ identity}
\put(70,120){\footnotesize$h_n=(\psi_{\al_n}\*\chi_{\al_n})\ph_{\al_n}^{-1}$}
\put(65,80){\footnotesize$h_n=(\psi_{\al_n}\*\chi_{\al_n})$}
\put(85,65){\footnotesize$(\psi_{\al_{n-1}}\*\chi_{\al_{n-1}})^{-1}$}
\put(85,50){\footnotesize$h_{n-1}\ph_{\al_{n-1}}\ph_{\al_n}^{-1}$}
\put(140,30){\footnotesize$h_n\sim(\psi_{\al_n}\*\chi_{\al_n})\ph_{\al_n}^{-1}$}
\put(62,0){$0$}
\put(192,0){$2$}
\put(257,0){$3$}
\put(55,7){$0$}
\put(55,140){$2$}
\put(55,205){$3$}
 \end{picture}
 \caption{\label{h_n} Constraints on $h_n$.}
 \end{figure}

Let $h_1:[0,3]^2\to[0,3]^2$ be the identity. Suppose given $n>1$ such that $h_{n-1}$ has been defined. By Proposition \ref{extend diffeomorphism} there is a homeomorphism $h_n:[0,3]^2\to[0,3]^2$ satisfying the requirements.

 Now define $\ph_\al$ by

$$\ph_\al(x)=\left\{\begin{array}{lll} (\psi_{\al}\*\chi_{\al})(\psi_{\al_n}\*\chi_{\al_n})^{-1}h_n\ph_{\al_n}(x) & \mbox{if} & x\in[0,\al_n]^2 \mbox{ for some } n\\  \si(\psi_{\al}\*\chi_{\al})(x) & \mbox{if} & x\in \ol{O_\al}\setminus(0,\al)^2 \end{array}\right..$$
The function $\ph_\al$ is well-defined because  if $m<n$ then $(\psi_{\al_m}\*\chi_{\al_m})^{-1}h_m\ph_{\al_m}$ and\linebreak $(\psi_{\al_n}\*\chi_{\al_n})^{-1}h_n\ph_{\al_n}$ agree on $[0,\al_m]^2$. It is easily verified that $\ph_\al$ is a homeomorphism, the main problem being to verify continuity on $\{\al\}\*[0,\al]$. It is here that we require precision in the approximation of the homeomorphism $(\psi_{\al_n}\*\chi_{\al_n})\ph_{\al_n}^{-1}$ by the diffeomorphism as required in the last inductive assumption for $h_n$. The approximation must improve as $n$ increases so that any sequence $\la(x_n,y_n)\ra$ in $[0,\al)^2$ converging to $(\al,y)\in\{\al\}\*[0,\al]$ is mapped by $(\psi_{\al_m}\*\chi_{\al_m})^{-1}h_m\ph_{\al_m}$ ($m$ increasing with $n$) to a sequence which also converges to $(\al,y)$. Then\linebreak $\ph_\al(x_n,y_n)\to(\psi_{\al}\*\chi_{\al})(\al,y)=\ph(\al,y)$ as $\si$ is the identity on $\{2\}\*[0,5]$.

Suppose $\be<\al$. Then the coordinate transformation between $\ph_\al$ and $\ph_\be$ is smooth on $\ph_\be(O_{\be+1})$ because 
$$\ph_\al\ph_\be^{-1}=(\psi_{\al}\*\chi_{\al})(\psi_{\al_{n}}\*\chi_{\al_{n}})^{-1}h_{n}\ph_{\al_n}\ph_\be^{-1}$$
is a composition of coordinate transition functions together with the diffeomorphism $h_n$ and hence is smooth, where $n$ is chosen so that $\al_n>\be$. Similarly its inverse is smooth. 

The remaining condition demanded of $\ph_\al$ is also satisfied.

Thus we have constructed a basis $\{(O_\al,\ph_\al)\ /\ \al\in\om_1\sm\{0\}\}$ for a differential structure on $\L_+^2$. Call this structure $\mc F$.

\begin{claim}
The differential structure $\mc F$ is not diffeomorphic to a product of structures on $\L_+$.
\end{claim}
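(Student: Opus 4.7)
The plan is to reach a contradiction by assuming $\mc F$ is diffeomorphic to a product structure. By Corollary \ref{nice submanifolds}, this would force the set
$$S = \{\al \in \om_1 \ / \ \L_+ \* \{\al\} \mbox{ is a differentiable submanifold of } (\L_+^2, \mc F)\}$$
to be closed and unbounded in $\om_1$. I will show instead that $S$ contains no $\al > 0$, which gives the required contradiction.

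Fix $\al \in \om_1 \sm \{0\}$ and examine the chart $\ph_\al$ in the atlas of $\mc F$. The horizontal segment $I_\al \* \{\al\}$ has second coordinate $\al$, so it avoids the open square $(0, \al)^2$; in the limit case this places the segment inside the second branch of the defining recursion, while in the successor case $\al = \ga+1$ the same observation shows the segment lies in $\ol{O_\al} \sm O_\ga$ (and on the overlap with $\ol{O_\ga}$, namely $x \le \al$, the two branches agree because $\si$ is the identity for first coordinate $\le 2$). In either event one obtains
$$\ph_\al(x, \al) = \si\bigl(\psi_\al(x), \chi_\al(\al)\bigr) = \si\bigl(\psi_\al(x), 2\bigr) \qquad \mbox{for every } x \in I_\al,$$
and since $\psi_\al$ carries $I_\al$ onto $(0, 5)$, the image $\ph_\al(I_\al \* \{\al\}) = \si\bigl((0, 5) \* \{2\}\bigr)$ is the union of the horizontal pieces $(0, 3] \* \{2\}$ and $[4, 5) \* \{2\}$ with the tent $\{(t, 3 - 2|t - 3.5|) \ / \ t \in [3, 4]\}$. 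In particular this image has a genuine corner at $(3.5, 3)$ whose one-sided tangent rays are spanned by the linearly independent vectors $(1, 2)$ and $(1, -2)$.

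It remains to promote this chart-specific corner to a chart-invariant obstruction. Set $x^* = \psi_\al^{-1}(3.5) \in (\al, \al+1)$, so $\ph_\al(x^*, \al) = (3.5, 3)$. Were $\L_+ \* \{\al\}$ a differentiable submanifold of $(\L_+^2, \mc F)$, some $\mc F$-compatible chart $\tilde\ph$ defined near $(x^*, \al)$ would realise this submanifold as a straight coordinate segment through $\tilde\ph(x^*, \al)$. Then $\ph_\al \circ \tilde\ph^{-1}$ is a smooth diffeomorphism between open subsets of $\R^2$, and its invertible Jacobian at $\tilde\ph(x^*, \al)$ would have to carry the single tangent direction of that straight segment to a single one-dimensional subspace at $(3.5, 3)$, in contradiction to the two linearly independent tangent rays exhibited above. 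Hence $\al \notin S$ for every $\al > 0$, so $S$ is not unbounded and $\mc F$ is not diffeomorphic to any product. The main subtlety I anticipate is the initial reduction to the formula $\ph_\al(x, \al) = \si(\psi_\al(x), 2)$ — verifying that the successor and limit recursions really do leave the shearing $\si$ in force on $I_\al \* \{\al\}$, rather than having it undone by the correction maps $(\psi_\al \* \chi_\al)(\psi_\ga \* \chi_\ga)^{-1}$ or the diffeomorphisms $h_n$; once this is in place, the linear-algebra argument that a $C^1$ diffeomorphism cannot smooth out a genuine corner is routine.
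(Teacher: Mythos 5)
Your proposal is correct and follows essentially the same route as the paper: both invoke Corollary \ref{nice submanifolds} to force the set of good horizontal levels to be club, then use the formula $\ph_\al(t,\al)=\si(\psi_\al(t),2)$ to exhibit the sheared corner in the image of $\L_+\*\{\al\}$ under the chart $\ph_\al$, concluding that no level $\al>0$ works. The only differences are cosmetic: you spell out the tangent-ray/Jacobian argument that the corner cannot be a smooth submanifold (which the paper merely asserts) and you track the whole segment $I_\al\*\{\al\}$ where the paper restricts to $(\al,\al+1)\*\{\al\}$.
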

\begin{proof}
Let $\al<\om_1$ be any non-zero ordinal. We first show that $\L_+\*\{\al\}$ is not a smooth submanifold of $(\L_+^2,\mc F)$. Suppose instead that $\L_+\*\{\al\}$ is a smooth submanifold of $(\L_+^2,\mc F)$. Then $(\al,\al+1)\*\{\al\}$ is also a smooth submanifold of $(\L_+^2,\mc F)$, so there is a chart $\Big((\al,\al+1)\*(0,\al+1),\ph\Big)\in\mc F$ such that $\ph^{-1}(\R\*\{0\})=(\al,\al+1)\*\{\al\}$. It follows that $\ph_\al\ph^{-1}(\R\*\{0\})=\ph_\al((\al,\al+1)\*\{\al\})$ is a smooth submanifold of $\R^2$ with the usual differential structure. However, for $t\in(\al,\al+1)$ we have $\ph_\al(t,\al)=\si(\psi_\al(t),2)$, and hence 
$$\ph_\al((\al,\al+1)\*\{\al\})=\left(\left(2,3\right]\cup\left[4,5\right)\right)\*\{2\}\cup\left\{\left(x,3-2\left|x-3\frac{1}{2}\right|\right)\ :\ 3\le x\le4\right\}.$$
As this last set is not a smooth submanifold of $\R^2$, it follows that $(0,\om_1)\*\{\al\}$ is not a smooth submanifold of $(\L_+^2,\mc F)$.

The claim now follows from Lemma \ref{lemmaclub} and Corollary \ref{nice submanifolds} because $\om_1\sm\{0\}$ is closed and unbounded.
\end{proof}

We now address the question: how many exotic differential structures does $\L_+^2$ support? The argument presented in \cite[p.156]{N92} that $\L_+$ supports no more than $2^{\aleph_1}$ many mutually non-diffeomorphic differential structures applies as well to $\L_+^2$. On the other hand \cite[Theorem 5.2]{N92} exhibits exactly $2^{\aleph_1}$ many mutually non-diffeomorphic differential structures on $\L_+$. Thus we might expect to find $2^{\aleph_1}$ many mutually non-diffeomorphic exotic differential structures on $\L_+^2$, and this is indeed the case.

Let $\mc D$ be any differential structure on $\L_+$. Apply the construction above with $\mc E=\mc D$ and denote the resulting exotic differential structure on $\L_+^2$ by $\widehat{\mc D}$.
\begin{thm}\label{number of exotic structures}
There are $2^{\aleph_1}$ mutually non-diffeomorphic exotic differential structures on $\L_+^2$.
\end{thm}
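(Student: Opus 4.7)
The plan is to combine \cite[Theorem 5.2]{N92}, which exhibits $2^{\aleph_1}$ mutually non-diffeomorphic differential structures $\{\mc D_\lam\}_{\lam\in\La}$ on $\L_+$, with the claim that the assignment $\mc D\mapsto\widehat{\mc D}$ is injective on diffeomorphism classes. Each $\widehat{\mc D_\lam}$ is exotic by the preceding Claim, and the upper bound $2^{\aleph_1}$ on the number of differential structures on $\L_+^2$ has already been noted. So it suffices to prove: if $\widehat{\mc D_1}\cong\widehat{\mc D_2}$, then $\mc D_1\cong\mc D_2$.

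The decisive feature is an intrinsic horizontal-vertical asymmetry of $\widehat{\mc D}$. Every vertical line $\{\al\}\times\L_+$ is a smooth submanifold of $(\L_+^2,\widehat{\mc D})$: an induction through the construction shows that $\ph_\be(\al,y)=(\psi_\be(\al),\chi_\be(y))$ for all $\be>\al$ and $y\in I_\be$, since $\si$ preserves the first coordinate and $\psi_\be(\al)\le 2$ avoids the shearing interval $(3,4)$, while the auxiliary limit-stage diffeomorphisms $h_n$ can be shown, again by induction, to fix each relevant image point $(\psi_{\al_n}(\al),\chi_{\al_n}(y))$. By the preceding Claim, horizontal lines fail to be smooth submanifolds, so any diffeomorphism $h:(\L_+^2,\widehat{\mc D_1})\to(\L_+^2,\widehat{\mc D_2})$ must send vertical lines to vertical lines. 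Combining this with Proposition \ref{lineup} and its analogue for the second coordinate (obtained by conjugating with the coordinate swap and adjusting orientation as in the proof of Corollary \ref{nice submanifolds}) yields a closed unbounded $S\subset\om_1$ with $h(\{\al\}\times\L_+)=\{h_1(\al)\}\times\L_+$ for every $\al\in S$.

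Fix any $\al\in S$. Then $h$ restricts to a diffeomorphism $h_\al$ between the induced smooth structures on the two vertical lines. Using the computation above and identifying each vertical line $\{\ga\}\times\L_+$ with $\L_+$ via second-coordinate projection, the induced chart on $\{\ga\}\times I_\be$ is precisely $\chi_\be$, a chart of $\mc E=\mc D$. As $\be$ ranges over a cofinal collection of ordinals these charts cover $\L_+$, so the induced structure coincides with $\mc D$. Hence $h_\al$ descends to a diffeomorphism $(\L_+,\mc D_1)\cong(\L_+,\mc D_2)$, completing the proof.

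The principal obstacle is the careful verification, by induction on the construction, that the limit-stage diffeomorphisms $h_n$ fix the required image points, so that the clean formula $\ph_\be(\al,y)=(\psi_\be(\al),\chi_\be(y))$ truly holds. A secondary technicality is orientation: Proposition \ref{lineup} is stated for orientation-preserving homeomorphisms, and the coordinate swap used in Corollary \ref{nice submanifolds} is not a diffeomorphism of $\widehat{\mc D}$ because $\si$ breaks the coordinate symmetry. Handling the orientation-reversing case will likely exploit the smoothness dichotomy between horizontal and vertical lines, which already forbids $h$ from interchanging the two foliations.
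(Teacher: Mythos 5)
Your overall strategy --- reduce the theorem to injectivity of $\mc D\mapsto\widehat{\mc D}$ on diffeomorphism classes, find a closed unbounded set of vertical lines preserved by a putative diffeomorphism $h$, and read off a diffeomorphism $(\L_+,\mc D_1)\cong(\L_+,\mc D_2)$ from the restriction of $h$ --- is the same as the paper's. However, the step you yourself flag as ``the principal obstacle'' is a genuine gap, not a deferred technicality. The clean formula $\ph_\be(\al,y)=(\psi_\be(\al),\chi_\be(y))$ for \emph{all} $y\in I_\be$ does not follow from the construction: at limit stages $\ph_\be$ equals $(\psi_\be\*\chi_\be)(\psi_{\al_n}\*\chi_{\al_n})^{-1}h_n\ph_{\al_n}$ on $[0,\al_n]^2$, and the maps $h_n$ are produced by Proposition \ref{extend diffeomorphism} as approximations subject only to the listed constraints; nothing forces them to fix the points $(\psi_{\al_n}(\al),\chi_{\al_n}(y))$ or to carry vertical segments to vertical segments. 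Imposing such an extra requirement would have to be checked for consistency with the other inductive demands and is not obviously obtainable from the approximation theorem. Consequently your assertions that the whole line $\{\al\}\*\L_+$ is a smooth submanifold of $(\L_+^2,\widehat{\mc D})$ and that the induced structure on it is $\mc D$ are unproven, and your entire second and third paragraphs rest on them. The paper sidesteps this completely: it restricts $h$ only to $\{\al\}\*(\al,\om_1)$, the part of the vertical line \emph{above} the diagonal, where the inductive condition (agreement of $\ph_\be$ with $\psi_\be\*\chi_\be$ near $[0,\be)\*[\be,\be+1]$) guarantees outright that $\widehat{\mc D}$ restricts to $\mc D\*\mc D$, and then uses Proposition \ref{shrinkL} to identify $((\al,\om_1),\mc D)$ with $(\L_+,\mc D)$. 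Rerouting your argument through these upper half-lines makes the ``principal obstacle'' disappear.

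The orientation-reversing case is also not actually handled: ``will likely exploit the smoothness dichotomy'' is not an argument, and that dichotomy is precisely what you have not established. The paper's fix is soft and needs no such dichotomy: if $(\L_+^2,\widehat{\mc D})$ is diffeomorphic to both $(\L_+^2,\widehat{\mc E})$ and $(\L_+^2,\widehat{\mc F})$ but only via orientation-reversing maps, then the composite diffeomorphism $(\L_+^2,\widehat{\mc E})\to(\L_+^2,\widehat{\mc F})$ preserves orientation, whence $(\L_+,\mc E)\cong(\L_+,\mc F)$ by the orientation-preserving case already proved; thus $\mc D\mapsto\widehat{\mc D}$ is at most two-to-one on diffeomorphism classes, which still yields $2^{\aleph_1}$ mutually non-diffeomorphic exotic structures. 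You should adopt this (or something equally concrete) rather than leaving that case open.
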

\begin{proof}
Suppose given differential structures $\mc D$ and $\mc E$ on $\L_+$ and an orientation-preserving diffeomorphism $h:(\L_+^2,\widehat{\mc D})\to(\L_+^2,\widehat{\mc E})$. By Proposition \ref{lineup}, for a closed unbounded set of $\al\in\om_1$, the map $h$ restricts to a homeomorphism taking $\{\al\}\*(\al,\om_1)$ to itself. Now $\widehat{\mc D}|\{\al\}\*(\al,\om_1)$ is the same as $\mc D\*\mc D|\{\al\}\*(\al,\om_1)$ with the same for $\mc E$ so, using Proposition \ref{shrinkL} and denoting ``is diffeomorphic to'' by $\approx$, we have
\begin{eqnarray*}
(\L_+,\mc D) & \approx & ((\al,\om_1),\mc D)\\
& \approx & (\{\al\}\*(\al,\om_1),\mc D\*\mc D|\{\al\}\*(\al,\om_1))\\
& \approx & (\{\al\}\*(\al,\om_1),\widehat{\mc D}|\{\al\}\*(\al,\om_1))\\
& \approx & (\{\al\}\*(\al,\om_1),\widehat{\mc E}|\{\al\}\*(\al,\om_1))\\
& \approx & (\{\al\}\*(\al,\om_1),\mc E\*\mc E|\{\al\}\*(\al,\om_1))\\
& \approx & ((\al,\om_1),\mc E)\\
& \approx & (\L_+,\mc E).
\end{eqnarray*}

It follows that for any differential structure $\mc D$ on $\L_+$ there can be at most one equivalence class of structures, represented say by $\mc E$, such that $(\L_+^2,\widehat{\mc D})$ is diffeomorphic to $(\L_+^2,\widehat{\mc E})$ but $(\L_+,\mc D)$ is not diffeomorphic to $(\L_+,\mc E)$. Indeed, if $\mc D$, $\mc E$ and $\mc F$ are three differential structures on $\L_+$ and $(\L_+^2,\widehat{\mc D})$ is diffeomorphic to both $(\L_+^2,\widehat{\mc E})$ and $(\L_+^2,\widehat{\mc F})$ but $(\L_+,\mc D)$ is not diffeomorphic to either $(\L_+,\mc E)$ or $(\L_+,\mc F)$, then diffeomorphisms $g:(\L_+^2,\widehat{\mc D})\to(\L_+^2,\widehat{\mc E})$ and $h:(\L_+^2,\widehat{\mc D})\to(\L_+^2,\widehat{\mc F})$ must reverse orientation. In that case the diffeomorphism $hg^{-1}:(\L_+^2,\widehat{\mc E})\to(\L_+^2,\widehat{\mc F})$ preserves orientation and hence $(\L_+,\mc E)$ is diffeomorphic to $(\L_+,\mc F)$ by what we have already shown.

It now follows from \cite[Theorem 5.2]{N92} that there are $2^{\aleph_1}$ mutually non-diffeomorphic exotic differential structures on $\L_+^2$.
\end{proof}

As a complement to Theorem \ref{number of exotic structures} we have the following.

\begin{thm}
There are $2^{\aleph_1}$ mutually non-diffeomorphic product differential structures on $\L_+^2$.
\end{thm}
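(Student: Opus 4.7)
The plan is to produce $2^{\aleph_1}$ pairwise non-diffeomorphic product structures by a diagonal-product construction, with the upper bound coming from the general counting argument referenced earlier in the paper.

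Let $\{\mc D_\xi\,:\,\xi<2^{\aleph_1}\}$ be the family of pairwise non-diffeomorphic structures on $\L_+$ provided by \cite[Theorem 5.2]{N92}, and for each $\xi$ form the symmetric product $\mc D_\xi\*\mc D_\xi$ on $\L_+^2$. Each of these is visibly a product structure, so it suffices to show that $\mc D_\xi\*\mc D_\xi$ and $\mc D_\eta\*\mc D_\eta$ are non-diffeomorphic whenever $\xi\ne\eta$.

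Suppose $h:(\L_+^2,\mc D_\xi\*\mc D_\xi)\to(\L_+^2,\mc D_\eta\*\mc D_\eta)$ is a diffeomorphism. If $h$ reverses orientation I would replace it by $\tau\circ h$, where $\tau(x,y)=(y,x)$; because the two factors of each product coincide, $\tau$ is a self-diffeomorphism of both sides, so the composite is still a diffeomorphism and now preserves orientation. By Proposition \ref{lineup}, the set of $\al\in\om_1$ with $h(\L_+\*\{\al\})=\L_+\*\{\al\}$ is closed unbounded, and in particular non-empty. For any such $\al$, the restriction of $h$ is a homeomorphism from $\L_+\*\{\al\}$ to itself; with respect to the induced submanifold structures, which are copies of $\mc D_\xi$ and $\mc D_\eta$ respectively after identifying the slice with $\L_+$ via first-coordinate projection, this restriction is a diffeomorphism. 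Hence $(\L_+,\mc D_\xi)\approx(\L_+,\mc D_\eta)$, and so $\xi=\eta$, as desired.

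The upper bound of $2^{\aleph_1}$ on the number of product structures up to diffeomorphism is immediate, since $\L_+^2$ supports at most $2^{\aleph_1}$ mutually non-diffeomorphic differential structures in total by the argument of \cite[p.156]{N92} already cited in the discussion preceding Theorem \ref{number of exotic structures}. The only subtle point I anticipate is the reduction to the orientation-preserving case; this works cleanly precisely because the diagonal choice $\mc D_\xi\*\mc D_\xi$ keeps the coordinate swap $\tau$ available as a self-diffeomorphism on both sides, so that Proposition \ref{lineup} can be applied directly without having to keep track of an asymmetric pair of factors.
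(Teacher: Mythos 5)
Your proof is correct, but it reaches the conclusion by a different route than the paper. The paper restricts attention to the same symmetric products $\mc D\*\mc D$, but then passes to the \emph{diagonal} $\Delta\subset\L_+^2$, observing that $(x,x)\mapsto x$ is a diffeomorphism $(\Delta,\mc D\*\mc D|\Delta)\to(\L_+,\mc D)$, so that recovering the factor from the product is a one-line computation; the cost is that one still has to know that a diffeomorphism of the two product manifolds carries the diagonal to something identifiable with the diagonal of the target, a point the paper leaves implicit (it ultimately rests on the mapping class group results of \cite{BDG10}). You instead recover the factor from a horizontal slice $\L_+\*\{\al\}$, invoking Proposition \ref{lineup} explicitly to find club-many slices that are preserved, and you handle the orientation hypothesis of that proposition by composing with the coordinate swap $\tau(x,y)=(y,x)$ --- which is available as a self-diffeomorphism precisely because you chose the symmetric products $\mc D_\xi\*\mc D_\xi$. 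This is the same mechanism the paper uses in the proof of Theorem \ref{number of exotic structures}, and your version has the advantage of making the ``the diffeomorphism respects the relevant one-dimensional submanifold'' step fully explicit; it is also slightly cleaner here than in that theorem, since for a genuine product structure the induced structure on a preserved slice is the factor structure globally, so you do not even need Proposition \ref{shrinkL}. Both arguments are sound; the paper's is shorter once the diagonal-preservation point is granted, while yours is more self-contained relative to the results actually quoted in the paper.
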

\begin{proof}
It suffices to show that if $\mc D$ and $\mc E$ are two differential structures on $\L$ such that $(\L^2,\mc D\*\mc D)$ is diffeomorphic to $(\L^2,\mc E\*\mc E)$ then $(\L,\mc D)$ is diffeomorphic to $(\L,\mc E)$. However it is easy to show that the homeomorphism $(x,x)\mapsto x$ from the diagonal $\Delta$ of $\L^2$ to $\L$ is a diffeomorphism from $(\Delta,\mc D\*\mc D|\Delta)$ to $(\L,\mc D)$.
\end{proof}


\end{document}